\documentclass{article}

\usepackage{amsmath,amsthm,amscd,amsfonts,amssymb}

\usepackage{a4wide}

% -------------------------------------

\newtheorem{theorem}{Theorem}[section]
\newtheorem{lemma}[theorem]{Lemma}
\newtheorem{proposition}[theorem]{Proposition}
\theoremstyle{definition}
\newtheorem{definition}[theorem]{Definition}

% -------------------------------------

\begin{document}

\title{Optimal control with time-delays via the penalty method\thanks{This is a preprint of a paper 
whose final and definite form is: Mathematical Problems in Engineering (ISSN 1024-123X) 2014, 
Article ID 250419, http://dx.doi.org/10.1155/2014/250419}}

\author{Mohammed Benharrat$^1$\\
\texttt{mohammed.benharrat@gmail.com}
\and Delfim F. M. Torres$^2$\\
\texttt{delfim@ua.pt}}

\date{$^1$D\'epartement de Math\'ematiques et Informatique,\\
Ecole National Polytechnique d'Oran (Ex. ENSET d'Oran),\\
B.P. 1523 El M'Naouar, Oran, Alg\'{e}rie\\[0.3cm]
$^2$Center for Research and Development in Mathematics and Applications (CIDMA),
Department of Mathematics, University of Aveiro,\\ 3810--193 Aveiro, Portugal}

% -------------------------------------

\maketitle

% -------------------------------------

\begin{abstract}
We prove necessary optimality conditions of Euler--Lagrange type for
a problem of the calculus of variations with time delays, where the
delay in the unknown function is different from the delay in its derivative.
Then, a more general optimal control problem with time delays
is considered. Main result gives a convergence theorem, allowing to obtain
a solution to the delayed optimal control problem by considering
a sequence of delayed problems of the calculus of variations.

\bigskip

\noindent {\bf Keywords:} time delays, delayed calculus of variations,
delayed optimal control, necessary optimality conditions, penalty method.

\end{abstract}

% ----------------------

\section{Introduction}

Over the past years, there has been an increasing interest in time-delay
problems of the calculus of variations and control \cite{39a,MyID:256,MyID:253,book}.
Such interest is explained for their importance in control and engineering
\cite{MyID:264,MyID:298,MR2141765,29:Salamon}. Indeed, time delays are inherent
in various real systems, such as control systems and optimal control
problems in engineering \cite{MR2553382,MR3124697}.

In this paper we improve recent optimality conditions
for time-delay variational problems. In \cite{MyID:231}
necessary optimality conditions of Euler--Lagrange,
DuBois--Reymond and Noether type were obtained for problems
of the calculus of variations with a time delay.
The results of \cite{MyID:231} were then extended to
delayed variational problems with higher order
derivatives in \cite{MyID:256}. Here we model
time-delay variational problems in a more realistic way:
while in \cite{MyID:256,MyID:231} the delay on
functions and their derivatives (and control variables)
is always the same, here we consider different delays for the functions
and derivatives/controls.

The text is organized as follows. In Section~\ref{sub1}
we formulate the delayed problem of the calculus of variations,
where the delay in the unknown functions
is different from the delay in their derivatives.
The main result in this section is Theorem~\ref{th:EL1},
which provides necessary optimality conditions of Euler--Lagrange type.
Control strategies via an exterior penalty method are then investigated
in Section~\ref{sec:oc:td}.
The idea is to replace the optimal control problem with time-delays
by a series of delayed problems of the calculus of variations.
The main result gives a convergence theorem that allows
to obtain a solution to delayed optimal control problems
with linear delayed control systems, by considering a sequence
of variational problems with time-delays of the type considered before
in Section~\ref{sub1} (see Theorem~\ref{thm4.1}).
We end with Section~\ref{sec:conc} of conclusions.

% -------------------------------------

\section{Calculus of variations with time delays}
\label{sub1}

We consider the following fundamental problem of the calculus of variations
with time delays, where the delay in the function we are looking for
is different from the delay in its derivative:
\begin{equation}
\label{Pe}
\min \int_{0}^{\top}
L\left(t,x(t),x(t-\tau_1),\dot{x}(t),\dot{x}(t-\tau_2)\right) dt
\end{equation}
subject to
\begin{equation}
\label{Pe2}
\begin{split}
x(t)&=\theta_{1}(t), \quad t\in[-\tau_{1},-\tau_{2}] =: I_1,\\
x(t)&=\theta_{2}(t), \quad t\in[-\tau_{2},0] =: I_2,
\end{split}
\end{equation}
and
\begin{equation}
\label{Pe4}
x(\top)=\alpha,
\end{equation}
where $L :[0, \top] \times \mathbb{R}^{4 N} \rightarrow \mathbb{R}$,
$\left( t,a,\bar{a},b,\bar{b}\right) \rightarrow L\left( t,a,\bar{a},
b,\bar{b}\right)$, is the Lagrangian, $\top > 0$ is fixed in $\mathbb{R}$,
$\tau_{1}$ and  $\tau_{2}$ are two given positives real numbers such that $\tau_{2}< \tau_{1}<\top$,
and $\theta_{1}(\cdot)$ and $\theta_{2}(\cdot)$ are given piecewise smooth functions.
Let $I:=\left[ 0,\top \right]$, $L^{2}\left( I ,\mathbb{R}^{N}\right)$ be the Lebesgue space of measurable functions
such that
$$
\left\|x\right\|_{L^{2}}= \left(
\int^{\top}_{0}{\left\| x\left( t \right) \right\|_{\mathbb{R}^{N}}^2 dt}\right)^{\frac{1}{2}} <\infty
$$
and $H^{1}\left( I,\mathbb{R}^{N}\right)$ be the Sobolev space
of functions having their weak first derivative lying in $L^{2}\left( I,\mathbb{R}^{N}\right)$
and represented by
$$
x\left(t\right) =x(\tau)+\int\limits_{{\tau}}^{t}\dot{x}\left(s \right) ds
$$
for all $\tau$ and $t$ in $ I$. We denote
\begin{itemize}
\item $\mathcal{H}$ the space of all functions
$x : \left[ -\tau_{1},\top\right] \rightarrow {\mathbb{R}^{N}}$ such that
$x_{/ I_{1}}\in {L}^{2}\left( I_{1}, \mathbb{R}^{N}\right)$, $x _{/I_{2}}\in
H^{1}\left(  I_{2}, \mathbb{R}^{N}\right)$  and $x_{/ I}\in  H^{1}(  I ,\mathbb{R}^{N})$,
which is a Hilbert space with the norm
$$
\left\Vert x \right\Vert_{\mathcal{H}}=\left( \left\Vert x _{/ I_{1}}\right\Vert_{ L^{2}(
I_{1},\mathbb{R}^{N}) }^{2} +\left\Vert x_{/ I_{2}}\right\Vert _{ H^{1}(
I_{2}, \mathbb{R}^{N})}^{2}+\left\Vert x_{/ I}\right\Vert _{H^{1}\left(
I,\mathbb{R}^{N}\right)}^{2}\right) ^{\frac{1}{2}};
$$
\item $D:=\left\{ x\left(\cdot\right) \in \mathcal{H} : x_{/ I_{1}}
=\theta_{1}, x_{/I_{2}}=\theta_{2} ,\text{ and   }x\left( \top\right)
=\alpha\right\}$;
\item $J: \mathcal{H} \longrightarrow \mathbb{R}$ the functional
$$
J\left( x\left(\cdot\right) \right)
=\int_{0}^{\top}L\left( t,x\left( t\right), x\left( t-\tau_{1}\right), \dot{x}\left( t\right),
\dot{x}\left( t-\tau_{2}\right) \right) dt.
$$
\end{itemize}
Our problem \eqref{Pe}--\eqref{Pe4} takes then the following form:
\begin{equation}
\label{eq:prb}
\min_{x\left( \cdot\right) \in D} J\left( x\left( \cdot \right) \right).
\end{equation}
We make the following assumptions on the data of problem \eqref{eq:prb}:
\begin{itemize}
\item[$(A_1)$] Lagrangian $L$ is a $C^{1}$ Carath\'{e}odory mapping, i.e.,
it is of class $C^{1}$ in $\left( a,\bar{a},b,\bar{b}\right)$ for almost all
$t \in \left[ 0,\top\right]$ and is measurable in $t$ for every
$\left( a,\bar{a},b,\bar{b}\right)$;
\item[$(A_2)$] there exist $\gamma_i \left(\cdot\right)
\in  L^{2}\left( I,\mathbb{R}^{+}\right)$, $i=1, \ldots, 5$,
such that a.e. in $t\in I$
\begin{equation*}
\begin{split}
\left\vert L\left( t,a,\bar{a},b,\bar{b}\right) \right\vert
&\leq \gamma_1 \left( t\right),\\
\left\Vert \partial_{i}L\left( t,a,\bar{a},b,\bar{b}\right) \right\Vert
&\leq \gamma_{i} \left( t\right), \quad i=2, \ldots, 5,
\end{split}
\end{equation*}
where $\partial_{i}L$ is the partial derivative of $L$
with respect to its $i$th argument.
\end{itemize}

\begin{definition}[Cone of tangents]
\label{def:cone:tang}
Let $Z$ be a normed space, $A\subset Z$, and $a\in \overline{A}$. The cone of tangents
$T(A,a)$ is the set of all $z\in Z$ with the property that there is a sequence $(a_n)$
in $A$ converging strongly to $a$ and a sequence of non-negative numbers $(\alpha_n )$
such that $\alpha_n (a_n -a)\rightarrow z$.
\end{definition}

\begin{lemma}
\label{l41}
The  set $D$ is an affine linear subspace of  $\mathcal{H}$ and the cone of tangents $T(D,x(\cdot))$ is given by
$$
T\left( D,x\left(\cdot\right) \right) =\left\{ v\left(\cdot\right)
\in \mathcal{H}: v\left(\cdot\right)_{/ I_{1}}=0,\
v\left(\cdot\right)_{/I_{2}}=0,\text{ and }v\left( \top\right) =0\right\}.
$$
\end{lemma}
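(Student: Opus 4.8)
The plan is to reduce both assertions to the single observation that $D$ is a translate of a fixed closed linear subspace, and then to invoke the general principle that the cone of tangents of an affine subspace at any of its points is exactly its direction space. First I would fix a reference element $x_0 \in D$: since $\theta_1 \in L^2(I_1,\mathbb{R}^{N})$ and $\theta_2 \in H^1(I_2,\mathbb{R}^{N})$ are given, and on $I$ one may take, for instance, the affine map joining $\theta_2(0)$ at $t=0$ to $\alpha$ at $t=\top$ (which lies in $H^1(I,\mathbb{R}^{N})$ and respects the junction and endpoint conditions), such an $x_0$ exists and $D \neq \emptyset$. Setting
$$
V := \left\{ v(\cdot) \in \mathcal{H} : v_{/I_{1}} = 0,\ v_{/I_{2}} = 0,\ v(\top) = 0 \right\},
$$
I would check that $D = x_0 + V$. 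Each defining condition of $D$ is an affine equality, so the difference of two elements of $D$ annihilates $\theta_1$, $\theta_2$ and $\alpha$ and thus lies in $V$, while adding any $v \in V$ to $x_0$ preserves all three constraints. As $V$ is plainly a linear subspace, this already proves the first assertion.

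It then remains to establish $T(D,x(\cdot)) = V$ for every $x \in D$, which I would do by two inclusions read straight off Definition~\ref{def:cone:tang}. For $V \subseteq T(D,x)$, given $z \in V$ I take $x_n := x + \tfrac1n z$ and $\alpha_n := n$; then $x_n - x = \tfrac1n z \in V$ gives $x_n \in x + V = D$, one has $x_n \to x$ in $\mathcal{H}$ since $\|x_n - x\|_{\mathcal{H}} = \tfrac1n\|z\|_{\mathcal{H}} \to 0$, and $\alpha_n(x_n - x) = z$, so $z \in T(D,x)$. For the reverse inclusion, take $z \in T(D,x)$ with witnesses $x_n \in D$, $x_n \to x$, and $\alpha_n \geq 0$ such that $\alpha_n(x_n - x) \to z$. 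Since $x_n, x \in D$ we have $x_n - x \in V$, hence $\alpha_n(x_n - x) \in V$, and passing to the limit yields $z \in \overline{V}$.

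The step that needs the most care, and the only genuinely analytic point, is closing that last argument: to conclude $z \in V$ from $z \in \overline{V}$ I must know that $V$ is closed in $\mathcal{H}$. The conditions $v_{/I_{1}} = 0$ and $v_{/I_{2}} = 0$ cut out closed sets because the restriction maps into $L^2(I_1,\mathbb{R}^{N})$ and $H^1(I_2,\mathbb{R}^{N})$ are bounded by the very definition of $\|\cdot\|_{\mathcal{H}}$. The delicate constraint is $v(\top)=0$, and the crux is that the point-evaluation $v \mapsto v(\top)$ is a bounded linear functional on $\mathcal{H}$. I would obtain this from the one-dimensional Sobolev embedding $H^1(I,\mathbb{R}^{N}) \hookrightarrow C(\overline{I},\mathbb{R}^{N})$, which gives $\|v(\top)\|_{\mathbb{R}^{N}} \leq c\,\|v_{/I}\|_{H^1(I,\mathbb{R}^{N})} \leq c\,\|v\|_{\mathcal{H}}$. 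Consequently $V$ is an intersection of kernels of continuous linear maps, hence closed, and the two inclusions combine to give $T(D,x(\cdot)) = V$, as claimed.
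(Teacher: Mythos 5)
Your proof is correct and follows essentially the same two-inclusion argument as the paper: the easy inclusion via $x_n = x + \tfrac{1}{n}v$ is identical, and the reverse inclusion rests on the same observation that differences of elements of $D$ vanish on $I_1$, $I_2$ and at $\top$. The only difference is that you make explicit what the paper leaves implicit, namely that $D$ is a nonempty affine translate of $V$ and that $V$ is closed (via boundedness of the restriction maps and the embedding $H^{1}(I,\mathbb{R}^{N})\hookrightarrow C(\overline{I},\mathbb{R}^{N})$), which is precisely what justifies passing to the limit in $\alpha_n\left(x_n - x\right)\rightarrow z$.
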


\begin{proof}
Let $v\left(\cdot\right) \in T\left( D,x\left(\cdot\right)\right)$.
Then there exist $\left( x_{n}\left(\cdot\right) \right)_{n}\subset D$ and  $\lambda _{n}\geq 0$
such that $x_{n}\left(\cdot\right) \rightarrow x\left(\cdot\right)$
in  $D$ implies that $ \lambda _{n}\left( x_{n}\left(\cdot\right)-x\left(\cdot\right)\right)
\rightarrow v\left(\cdot\right)$ in $\mathcal{H}$. Since
$x_{n}\left(\cdot\right),x\left(\cdot\right) \in  D$ for all $n$, we have
$$
\begin{array}{l}
x_{n}\left( \tau \right) =x\left( \tau \right)
=\theta _{1}\left( \tau\right),
\quad \tau \in \left[ -\tau_{1},-\tau_{2}\right],\\
x_{n}\left( \tau \right) =x\left( \tau \right)
=\theta _{2}\left( \tau\right),
\quad \tau \in \left[ -\tau_{2},0\right],\\
x_{n}\left(\top\right) =x\left( \top\right) =\alpha.
\end{array}
$$
Hence,
$$
\begin{array}{l}
\lambda _{n}\left( x_{n}\left( \tau \right) -x\left( \tau \right) \right) =0,
\quad \tau \in \left[ -\tau_{1},-\tau_{2}\right],\\
\lambda _{n}\left( x_{n}\left( \tau \right) -x\left( \tau \right) \right) =0,
\quad \tau \in \left[ -\tau_{2},0\right],\\
\lambda _{n}\left( x_{n}\left( \top\right) -x\left( \top\right) \right) =0.
\end{array}
$$
Therefore, $v(\cdot)\in \mathcal{H}$ with  $v(\tau )=0$
for all $\tau \in \left[ -\tau_{1},-\tau_{2}\right]$,
$v(\tau )=0$  for almost all $\tau \in \left[ -\tau_{2},0\right]$,
and $v(\top)=0$. Thus,
$$
T\left(D,x(\cdot)\right) \subset \left\{ v \left(\cdot\right) \in \mathcal{H} :
v(\cdot)_{/I_{1}} =0, v\left(\cdot\right)_{/I_{2}}=0,
\text{ and } v\left( \top \right) =\alpha \right\}=K.
$$
Conversely, let $v(\cdot)\in K$ for $x\left(\cdot\right) \in D$.
Define $x_{n}\left(\cdot\right)
=x\left(\cdot\right) +\frac{1}{n}v\left(\cdot\right)$.
Then $n\left( x_{n}\left(\cdot\right) -x\left(\cdot\right) \right)
=v\left(\cdot\right)$ with $v(\cdot) \in \mathcal{H}$. Hence,
$v(\cdot)\in T\left( D,x\left(\cdot\right) \right)$.
\end{proof}

For convenience, we introduce the operator $[\cdot]_{\tau_{1}}^{\tau_{2}}$ defined by
$$
[x]_{\tau_{1}}^{\tau_{2}}(t)
=\left(t,x(t),\dot{x}(t),x(t-\tau_1 ),\dot{x}(t-\tau_2)\right).
$$

\begin{proposition}
Under conditions $(A_1)$ and $(A_2)$, the mapping
$J\left(\cdot\right)$ is Fr\'{e}chet differentiable and
\begin{multline*}
J^{\prime }\left( x\left(\cdot\right) \right) \left( v\left(\cdot\right)\right)
=\int_{0}^{\top}\biggl(
\left\langle \partial_{2}L[x]_{\tau_{1}}^{\tau_{2}}(t),v\left( t\right)
\right\rangle
+\left\langle \partial_{3}L[x]_{\tau_{1}}^{\tau_{2}}(t),v\left( t-\tau_{1}\right)
\right\rangle \\
+\left\langle \partial_{4}L[x]_{\tau_{1}}^{\tau_{2}}(t),\dot{v}\left( t\right) \right\rangle
+\left\langle \partial_{5}L[x]_{\tau_{1}}^{\tau_{2}}(t), \dot{v}\left( t-\tau_{2}\right)
\right\rangle \biggr) dt.
\end{multline*}
\end{proposition}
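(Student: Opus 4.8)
The plan is to first verify that the asserted right-hand side, which I will denote $\Lambda_x(v)$, is a bounded linear functional on $\mathcal{H}$, and then to identify it as the Fr\'{e}chet derivative by estimating the remainder. Linearity of $v\mapsto\Lambda_x(v)$ is immediate. For boundedness I would apply Cauchy--Schwarz termwise together with $(A_2)$: the two undelayed terms are bounded by $\|\gamma_2\|_{L^2(I)}\|v\|_{L^2(I)}$ and $\|\gamma_4\|_{L^2(I)}\|\dot v\|_{L^2(I)}$, both controlled by $\|v\|_{\mathcal{H}}$. For the two delayed terms I would substitute $s=t-\tau_1$, respectively $s=t-\tau_2$, so that $\int_0^{\top}\|v(t-\tau_1)\|^2\,dt=\int_{-\tau_1}^{\top-\tau_1}\|v(s)\|^2\,ds$ is bounded by the $L^2$-norm of $v$ over $[-\tau_1,\top]$, and likewise $\int_0^{\top}\|\dot v(t-\tau_2)\|^2\,dt$ by the $L^2$-norm of $\dot v$ over $[-\tau_2,\top]$. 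Here it is essential that the state delay $v(t-\tau_1)$ reaches into $I_1$, where only $L^2$-control is available (and only $L^2$-control is needed), while the derivative delay $\dot v(t-\tau_2)$ reaches only into $I_2\cup I$, where the $H^1$-pieces of $\|\cdot\|_{\mathcal{H}}$ supply the derivative control. This gives $|\Lambda_x(v)|\le C\,\|v\|_{\mathcal{H}}$, so $\Lambda_x\in\mathcal{H}^{*}$.

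For differentiability I would write the increment along the segment joining $x$ to $x+v$. By $(A_1)$ the integrand is $C^{1}$ in the spatial variables, so for a.e.\ $t$
$$
L[x+v]_{\tau_1}^{\tau_2}(t)-L[x]_{\tau_1}^{\tau_2}(t)
=\int_0^1 \frac{d}{ds}\,L[x+sv]_{\tau_1}^{\tau_2}(t)\,ds,
$$
and expanding the $s$-derivative by the chain rule produces exactly the four pairings of $\Lambda$, but with each $\partial_i L$ evaluated at $[x+sv]_{\tau_1}^{\tau_2}(t)$ instead of $[x]_{\tau_1}^{\tau_2}(t)$. Using $(A_2)$ to dominate the integrand (so that Fubini applies) and integrating in $t$, I obtain
$$
J(x+v)-J(x)=\int_0^1 \Lambda_{x+sv}(v)\,ds,
\qquad
R(v):=J(x+v)-J(x)-\Lambda_x(v)=\int_0^1\bigl(\Lambda_{x+sv}(v)-\Lambda_x(v)\bigr)\,ds.
$$

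It then remains to show $R(v)=o(\|v\|_{\mathcal{H}})$. Applying Cauchy--Schwarz to each of the four terms of $\Lambda_{x+sv}(v)-\Lambda_x(v)$ and reusing the change-of-variables bounds from the first step, I would estimate
$$
|R(v)|\le C\,\|v\|_{\mathcal{H}}\sum_{i=2}^{5}\sup_{0\le s\le 1}
\bigl\|\partial_i L[x+sv]_{\tau_1}^{\tau_2}-\partial_i L[x]_{\tau_1}^{\tau_2}\bigr\|_{L^2(I)},
$$
so it suffices to prove the sum tends to $0$ as $\|v\|_{\mathcal{H}}\to 0$. This is precisely the continuity of the superposition (Nemytskii) operators $x\mapsto\partial_i L[x]_{\tau_1}^{\tau_2}$ from $\mathcal{H}$ into $L^2(I,\mathbb{R}^N)$: as $\|v\|_{\mathcal{H}}\to 0$ the five arguments of $\partial_i L$ converge to those of $x$ in $L^2$ (again using the change of variables for the delayed slots, uniformly in $s\in[0,1]$), hence along any sequence a subsequence converges a.e.; since $\partial_i L$ is continuous in the spatial variables by $(A_1)$ and dominated by $2\gamma_i\in L^2$ by $(A_2)$, the dominated convergence theorem forces the $L^2$-difference to $0$.

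The main obstacle is exactly this last continuity statement, and in particular upgrading the pointwise smallness to the smallness uniform in $s\in[0,1]$ that the Fr\'{e}chet (as opposed to merely G\^{a}teaux) estimate requires. I would handle it by a standard subsequence argument: if $R(v_n)/\|v_n\|_{\mathcal{H}}\not\to 0$ for some $v_n\to 0$ in $\mathcal{H}$, I extract from $(x+sv_n)$ a subsequence whose delayed and undelayed arguments converge a.e.\ and apply dominated convergence as above to reach a contradiction. The time-delay structure adds no essential difficulty beyond bookkeeping, precisely because the three pieces $I_1,I_2,I$ of the $\mathcal{H}$-norm were arranged so that each shifted argument is controlled in the norm in which it actually appears.
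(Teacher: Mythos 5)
Your proof is correct, and it reaches the conclusion by a route that is organized differently from the paper's. The paper first computes the G\^{a}teaux derivative as a limit of difference quotients (passing to the limit under the integral by dominated convergence), then shows that $v\mapsto J'(x;v)$ is linear and bounded, and finally shows that $x\mapsto J'(x)$ is continuous from $\mathcal{H}$ into $\mathcal{H}^{*}$; Fr\'{e}chet differentiability is then obtained by tacitly invoking the classical fact that a continuous G\^{a}teaux derivative is a Fr\'{e}chet derivative. You instead bypass the difference quotient entirely: you write the increment in the integral form $J(x+v)-J(x)=\int_0^1\Lambda_{x+sv}(v)\,ds$ and estimate the remainder $R(v)$ directly, reducing everything to the continuity of the superposition operators $x\mapsto\partial_iL[x]_{\tau_1}^{\tau_2}$ in $L^2(I)$, \emph{uniformly} over the segment $s\in[0,1]$. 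The two proofs share their essential ingredients --- the boundedness estimate via Cauchy--Schwarz and the changes of variables $s=t-\tau_1$, $s=t-\tau_2$ (including your correct observation that the state delay reaches into $I_1$ where only $L^2$-control is available while the derivative delay reaches only into $I_2\cup I$), and the Nemytskii continuity proved by extracting a.e.\ convergent subsequences and applying dominated convergence with dominating function $2\gamma_i$. What your organization buys is a self-contained verification of the Fr\'{e}chet property: the $o(\|v\|_{\mathcal{H}})$ bound is exhibited explicitly, and the uniformity in $s$ --- which is exactly what separates Fr\'{e}chet from mere G\^{a}teaux differentiability, and which the paper's argument delegates to the unstated classical theorem --- is isolated and handled by your subsequence argument. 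The paper's organization, in exchange, also delivers the continuity of $J'$ as a standalone fact, which is not needed for the statement but is sometimes useful downstream.
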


\begin{proof}
Let $v\left(\cdot\right) \in \mathcal{H}$. We have
\begin{align*}
J^{\prime } \left( x\left(\cdot\right) ;v\left(\cdot\right) \right)
&= \underset{\lambda \rightarrow 0^{+}}{\lim }\frac{1}{\lambda }\left( J\left( x\left(\cdot\right)
+\lambda v\left(\cdot\right) \right) -J\left(
x\left(\cdot\right) \right) \right) \\
 &= \underset{\lambda \rightarrow 0^{+}}{\lim }
\int_{0}^{\top}\frac{1}{\lambda }\left[ L[x+\lambda v]_{\tau_{1}}^{\tau_{2}}(t) -L[x]_{\tau_{1}}^{\tau_{2}}(t)\right] dt.
\end{align*}
Define
$$
\Psi _{\lambda }\left( t\right) =\underset{\lambda \rightarrow 0^{+}}{\lim }
\int_{0}^{\top}\frac{1}{\lambda }\left[ L[x+\lambda v]_{\tau_{1}}^{\tau_{2}}(t) -L[x]_{\tau_{1}}^{\tau_{2}}(t)\right] dt
$$
and
\begin{multline*}
\Psi \left( t\right) =\left\langle \partial_{2}L[x]_{\tau_{1}}^{\tau_{2}}(t),v\left( t\right)
\right\rangle
+\left\langle \partial_{3}L[x]_{\tau_{1}}^{\tau_{2}}(t),v\left( t-\tau_{1}\right)
\right\rangle\\
+\left\langle \partial_{4}L[x]_{\tau_{1}}^{\tau_{2}}(t),\dot{v}\left( t\right) \right\rangle
+\left\langle \partial_{5}L[x]_{\tau_{1}}^{\tau_{2}}(t), \dot{v}\left( t-\tau_{2}\right)
\right\rangle.
\end{multline*}
Then, $\Psi _{\lambda }\left( t\right) \rightarrow \Psi\left( t\right)$ as $\lambda \rightarrow 0^{+}$
for almost all $t\in \left[ 0,\top\right]$. On the other hand,
$\left\vert \Psi _{\lambda }\left( t\right) \right\vert
\leq g\left( t\right)$  a.e. in $t\in \left[ 0,\top\right]$ with
$$
g\left( t\right) =\gamma \left( t\right) \left[ \left\Vert v\left(
t\right) \right\Vert _{{\mathbb{R}^{N}}}+\left\Vert v\left( t-\tau_{1}\right)
\right\Vert _{{\mathbb{R}^{N}}}+\left\Vert
\dot{v}\left( t\right) \right\Vert_{{\mathbb{R}^{N}}}
+\left\Vert v\left( t-\tau_{2}\right)\right\Vert_{{\mathbb{R}^{N}}}\right]
$$
a function not depending on $\lambda $, and
$\left\vert \Psi _{\lambda }\left( t\right) \right\vert \leq g\left( t\right)+1$
for almost all $t\in \left[ 0,\top\right]$ and $\lambda$ sufficiently small.
Since $\left[ 0,\top\right]$ has finite measure,
Lebesgue's theorem yields that $\int_{0}^{\top}\Psi _{\lambda}\left( t\right) dt
\rightarrow \int_{0}^{\top}\Psi \left(t\right) dt$ as
$\lambda \rightarrow 0^{+}$. Hence,
\begin{multline*}
J^{\prime }\left( x\left(\cdot\right) \right) \left( v\left(\cdot\right)\right)
=\int_{0}^{\top} \Bigl(\left\langle \partial_{2}L[x]_{\tau_{1}}^{\tau_{2}}(t),v\left( t\right)
\right\rangle+\left\langle \partial_{3}
L[x]_{\tau_{1}}^{\tau_{2}}(t),v\left( t-\tau_{1}\right)\right\rangle \\
+\left\langle \partial_{4}L[x]_{\tau_{1}}^{\tau_{2}}(t),\dot{v}\left( t\right) \right\rangle
+\left\langle \partial_{5}L[x]_{\tau_{1}}^{\tau_{2}}(t), \dot{v}\left( t-\tau_{2}\right)
\right\rangle \Bigr) dt.
\end{multline*}
This is the directional derivative of $J$ in the direction $v$.
To finish the proof, we need to show that
$J^{\prime}\left( x\left(\cdot\right) ;v\left(\cdot\right) \right)$
is linear and bounded in $v$ and continuous in $x$. The linearity is obvious.
We begin by proving that $J^{\prime }\left(x(\cdot);\cdot\right)$
is bounded from $\mathcal{H}$ to $\mathbb{R}$:
\begin{align*}
\left\vert J^{\prime }\left( x\left(\cdot\right);
v\left(\cdot\right)\right) \right\vert
&\leq \int_{0}^{\top}\left\vert  \left\langle \partial_{2}
L[x]_{\tau_{1}}^{\tau_{2}}(t),v\left( t\right)\right\rangle  \right\vert dt
+\int_{0}^{\top}\left\vert \left\langle \partial_{3}
L[x]_{\tau_{1}}^{\tau_{2}}(t),v\left( t-\tau_{1}\right)
\right\rangle \right\vert dt\\
&\quad + \int_{0}^{\top}\left\vert \left\langle \partial_{4}
L[x]_{\tau_{1}}^{\tau_{2}}(t),\dot{v}\left( t\right) \right\rangle \right\vert dt
+\int_{0}^{\top}\left\vert \left\langle \partial_{5}
L[x]_{\tau_{1}}^{\tau_{2}}(t), \dot{v}\left( t-\tau_{2}\right)
\right\rangle\right\vert dt\\
& \leq \int_{0}^{\top}\gamma_2 \left( t\right) \left\Vert
v\left( t\right) \right\Vert _{{\mathbb{R}^{N}}}dt+\int_{0}^{\top
}\gamma_3 \left( t\right) \left\Vert v\left(t
-\tau_{1}\right) \right\Vert_{{\mathbb{R}^{N}}}dt\\
& \quad +\int_{0}^{\top}\gamma_4 \left( t\right)
\left\Vert \dot{v}\left( t\right)\right\Vert_{{\mathbb{R}^{N}}}dt
+\int_{0}^{\top}\gamma_5 \left( t\right) \left\Vert \dot{v}\left(
t-\tau_{2}\right) \right\Vert _{{\mathbb{R}^{N}}}dt\\
& \leq \int_{0}^{\top}\gamma_2 \left( t\right) \left\Vert
v\left( t\right) \right\Vert _{{\mathbb{R}^{N}}}dt
+\int_{-\tau_{1}}^{\top -\tau_{1}}\gamma_3 \left( t+\tau_{1}\right) \left\Vert
v\left(t\right) \right\Vert _{{\mathbb{R}^{N}}}dt\\
&\quad +\int_{0}^{\top}\gamma_4
\left( t\right) \left\Vert \dot{v}\left( t\right) \right\Vert_{{\mathbb{R}^{N}}}dt
+\int_{-\tau_{2}}^{\top-\tau_{2}}\gamma_5 \left( t+\tau_{2}\right)
\left\Vert \dot{v}\left( t\right) \right\Vert _{{\mathbb{R}^{N}}}dt \\
& \leq \int_{0}^{\top}\gamma_2 \left( t\right) \left\Vert
v\left( t\right) \right\Vert _{{\mathbb{R}^{N}}}dt+\int_{-\tau_{1}}^{
-\tau_{2}}\gamma_3 \left( t+\tau_{1}\right) \left\Vert v\left( t\right)
\right\Vert _{{\mathbb{R}^{N}}}dt\\
&\quad +\int_{-\tau_{2}}^{0}\gamma_3 \left( t+\tau_{1}\right)
\left\Vert v\left( t\right) \right\Vert _{{\mathbb{R}^{N}}}dt
+ \int_{0}^{ \top -\tau_{1}} \gamma_3 \left(t+\tau_{1}\right)
\left\Vert v\left( t\right) \right\Vert_{\mathbb{R}^{N}}dt\\
& \quad +\int_{0}^{\top}\gamma_4 \left( t\right)
\left\Vert \dot{v}\left( t\right) \right\Vert _{{\mathbb{R}^{N}}}dt
+\int_{-\tau_{2}}^{0}\gamma_5 \left( t+\tau_{2}\right)
\left\Vert \dot{v}\left( t\right) \right\Vert _{{\mathbb{R}^{N}}}dt\\
& \leq M\left\Vert v\left(\cdot\right) \right\Vert_{\mathcal{H}}.
\end{align*}
We still need to prove the continuity of $J^{\prime }\left(\cdot\right)$.
Let $x_{n}\left(\cdot\right) \rightarrow x\left(\cdot\right)$
in $\mathcal{H}$. Then,
\begin{equation*}
\begin{split}
\biggl\vert \bigl[ J^{\prime }\left( x_{n}\left(\cdot\right)\right)
&-J^{\prime }\left( x\left(\cdot\right) \right) \bigr] \left(
v\left(\cdot\right) \right) \biggr\vert\\
& \leq \int_{0}^{\top}\left\vert
\left\langle \partial_{2}L[x_n -x]_{\tau_{1}}^{\tau_{2}}(t),v\left( t\right)
\right\rangle  \right\vert dt
+\int_{0}^{\top}\left\vert \left\langle \partial_{3}
L[x_n -x]_{\tau_{1}}^{\tau_{2}}(t),v\left( t-\tau_{1}\right)
\right\rangle \right\vert dt\\
& \quad + \int_{0}^{\top}\left\vert \left\langle \partial_{4}
L[x_n -x]_{\tau_{1}}^{\tau_{2}}(t),\dot{v}\left( t\right) \right\rangle \right\vert dt
+\int_{0}^{\top}\left\vert \left\langle \partial_{5}
L[x_n -x]_{\tau_{1}}^{\tau_{2}}(t), \dot{v}\left( t-\tau_{2}\right)
\right\rangle\right\vert dt\\
& \leq  \int_{0}^{\top} \left\| \partial_{2}L[x_n -x]_{\tau_{1}}^{\tau_{2}}(t)\right\| \left\Vert
v\left( t\right) \right\Vert _{{\mathbb{R}^{N}}}dt
+\int_{0}^{\top} \left\| \partial_{3}
L[x_n -x]_{\tau_{1}}^{\tau_{2}}(t)\right\|\left\Vert v\left( t-\tau_{1}\right) \right\Vert _{
{\mathbb{R}^{N}}}dt \\
& \quad +\int_{0}^{\top} \left\| \partial_{4}L[x_n -x]_{\tau_{1}}^{\tau_{2}}(t)\right\|
\left\Vert \dot{v}\left( t\right) \right\Vert _{{\mathbb{R}^{N}}}dt
+\int_{0}^{\top} \left\| \partial_{5}
L[x_n -x]_{\tau_{1}}^{\tau_{2}}(t)\right\| \left\Vert \dot{v}\left(
t-\tau_{2}\right) \right\Vert_{{\mathbb{R}^{N}}}dt \\
&= I_{2}+I_{3}+I_{4}+I_{5},
\end{split}
\end{equation*}
where
\begin{align*}
I_{2}& \leq \left\| \partial_{2}L[x_n -x]_{\tau_{1}}^{\tau_{2}}(\cdot)\right\| \left\Vert
v\left( t\right) \right\Vert _{L^{2}},\\
I_{3} & \leq \left\| \partial_{3}L[x_n -x]_{\tau_{1}}^{\tau_{2}}(\cdot)\right\| \left\Vert
v\left( t\right) \right\Vert _{L^{2}([-\tau_1 , \top], \mathbb{R}^{N})},\\
	I_{4} & \leq\left\| \partial_{4}L[x_n -x]_{\tau_{1}}^{\tau_{2}}(\cdot)\right\| \left\Vert
\dot{v}\left( t\right) \right\Vert _{L^{2}},\\
I_{5} & \leq \left\| \partial_{3}L[x_n -x]_{\tau_{1}}^{\tau_{2}}(\cdot)\right\| \left\Vert
\dot{v}\left( t\right) \right\Vert _{L^{2}([-\tau_1 , \top], \mathbb{R}^{N})}.
\end{align*}
On the other hand,
$x_{n}\left(\cdot\right) \rightarrow x\left(\cdot\right) $ in $\mathcal{H}$.
From Lebesgue's theorem, there exists $\Bbbk _{1},\Bbbk _{2},\Bbbk _{3}
\subset \mathbb{N}$ such that  $\Bbbk _{1}\subset \Bbbk _{2}\subset \Bbbk _{3}$ and
\begin{gather*}
x_{k}\left( t\right) \rightarrow x\left( t\right),
\quad \text{ a.e. } t\in \left[ 0, \top \right], \text{ for all } k\in \Bbbk_{1},\\
\dot{x}_{k}\left( t\right) \rightarrow \dot{x}\left( t\right),
\quad \text{ a.e. } t\in \left[ 0, \top \right], \text{ for all } k\in \Bbbk _{1},\\
x_{k}\left( t\right) \rightarrow x\left( t\right),
\quad \text{ a.e. } t \in [ -\tau_{2}, 0], \text{ for all } k\in \Bbbk _{2},\\
\dot{x}_{k}\left( t\right) \rightarrow \dot{x}\left( t\right),
\quad \text{ a.e. } t\in [ -\tau_{2}, 0], \text{ for all } k \in \Bbbk _{2},\\
x_{k}\left( t\right) \rightarrow x\left( t\right)
\quad \text{ a.e. } t\in [-\tau_{1}, -\tau_{2}], \text{ for all } k\in \Bbbk _{3}.
\end{gather*}
Hence,
\begin{gather*}
\dot{x}_{k}\left( t-\tau_{2}\right) \rightarrow \dot{x}\left( t-\tau_{2}\right),
\quad \text{ a.e. } t\in \left[ 0,\tau_{2}\right], \text{ for all } k \in \Bbbk _{2},\\
x_{k}\left( t-\tau_{1}\right) \rightarrow x\left( t-\tau_{1}\right),
\quad  \text{ a.e. } t \in \left[0,\tau_{1}\right], \text{ for all } k \in \Bbbk _{3}.
\end{gather*}
Since $L\left(t,\cdot,\cdot,\cdot\right)$ is $C^{1}$-Carath\'{e}odory,
assumption $(A_2)$ assures from Lebesgue's theorem that
$$
\left\| \partial_{i}L[x_n -x]_{\tau_{1}}^{\tau_{2}}(\cdot)\right\|
\longrightarrow 0, \quad i=2,3,4,5.
$$
This implies that $I_{1}+I_{2}+I_{3}+I_{4}\rightarrow 0$. Then,
$J^{\prime }\left(x_{k}\left(\cdot\right) \right) \rightarrow  J^{\prime }(x(\cdot))$.
The proof is complete.
\end{proof}

\begin{theorem}[Necessary optimality conditions of Euler--Lagrange type for problem \eqref{Pe}--\eqref{Pe4}]
\label{th:EL1}
Under conditions $(A_1)$ and $(A_2)$,
if $\bar{x}(\cdot)$ is a minimizer to problem \eqref{Pe}--\eqref{Pe4},
then $\bar{x}(\cdot)$ satisfies the following
\emph{Euler--Lagrange equations with time delay}:
\begin{equation*}
\begin{cases}
\frac{d}{dt}\left\{\partial_{4}L[\bar{x}]_{\tau_{1}}^{\tau_{2}}(t)+
\partial_{5}L[\bar{x}]_{\tau_{1}}^{\tau_{2}}(t+\tau_{2})\right\}
=\partial_{2}L[\bar{x}]_{\tau_{1}}^{\tau_{2}}(t)+\partial_{3}L[\bar{x}]_{\tau_{1}}^{\tau_{2}}(t+\tau_{1}),
\quad \text{ a.e. } t\in \left[ 0, \top -\tau_{1}\right],\\
\frac{d}{dt}\left\{\partial_{4}L[\bar{x}]_{\tau_{1}}^{\tau_{2}}(t)
+\partial_{5}L[\bar{x}]_{\tau_{1}}^{\tau_{2}}(t+\tau_{2})\right\}
=\partial_{2}L[\bar{x}]_{\tau_{1}}^{\tau_{2}}(t),
\quad \text{ a.e. } t \in \, ] \top-\tau_{1}, \top -\tau_{2}],\\
\frac{d}{dt}\partial_{4}L[\bar{x}]_{\tau_{1}}^{\tau_{2}}(t) =\partial_{2}L[\bar{x}]_{\tau_{1}}^{\tau_{2}}(t),
\quad  \text{ a.e. } t\in \, ]\top-\tau_{2}, \top].
\end{cases}
\end{equation*}
\end{theorem}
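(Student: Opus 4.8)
The plan is to deduce the three delayed Euler--Lagrange equations from the single stationarity identity $J'(\bar x(\cdot))(v(\cdot))=0$, valid for every admissible variation, by first shifting away the delays and then invoking a DuBois--Reymond argument.

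First I would set up the variational principle. Since $\bar x(\cdot)$ minimizes $J$ over the affine set $D$ and $J$ is Fr\'echet differentiable by the preceding proposition, the derivative must annihilate the tangent cone: $J'(\bar x(\cdot))(v(\cdot))=0$ for every $v(\cdot)\in T(D,\bar x(\cdot))$. By Lemma~\ref{l41}, $T(D,\bar x(\cdot))$ is the \emph{linear} subspace of those $v(\cdot)\in\mathcal H$ that vanish on $I_1\cup I_2=[-\tau_1,0]$ and at $\top$, so the one-sided optimality inequality becomes an equality. Because the $H^1$-restrictions of $v$ to $I_2$ and to $I$ share the endpoint $t=0$, continuity there forces $v(0)=0$ as well; hence the admissible test functions are exactly the $v\in H^1(I,\mathbb R^N)$ with $v(0)=v(\top)=0$, extended by zero on $[-\tau_1,0]$.

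Next I would remove the delays in the formula for $J'$. In the $\partial_3$-integral I substitute $t\mapsto t+\tau_1$ and in the $\partial_5$-integral $t\mapsto t+\tau_2$; since $v$ vanishes on $[-\tau_1,0]$ and $\dot v$ on $[-\tau_2,0]$, the shifted integrals collapse to $[0,\top-\tau_1]$ and $[0,\top-\tau_2]$, respectively. Collecting terms, and using $\tau_2<\tau_1$ so that $\top-\tau_1<\top-\tau_2$, the condition takes the form
\begin{equation*}
\int_0^{\top}\Bigl(\bigl\langle A(t),v(t)\bigr\rangle+\bigl\langle B(t),\dot v(t)\bigr\rangle\Bigr)\,dt=0,
\end{equation*}
for all such $v$, where $A(t)=\partial_2 L[\bar x]_{\tau_1}^{\tau_2}(t)+\partial_3 L[\bar x]_{\tau_1}^{\tau_2}(t+\tau_1)$ on $[0,\top-\tau_1]$ and $A(t)=\partial_2 L[\bar x]_{\tau_1}^{\tau_2}(t)$ on $(\top-\tau_1,\top]$, while $B(t)=\partial_4 L[\bar x]_{\tau_1}^{\tau_2}(t)+\partial_5 L[\bar x]_{\tau_1}^{\tau_2}(t+\tau_2)$ on $[0,\top-\tau_2]$ and $B(t)=\partial_4 L[\bar x]_{\tau_1}^{\tau_2}(t)$ on $(\top-\tau_2,\top]$. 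Assumption $(A_2)$ places $A,B\in L^1(I,\mathbb R^N)$, which is what the final step needs. To conclude, I set $P(t)=\int_0^t A(s)\,ds$, integrate the $A$-term by parts (the boundary terms vanish since $v(0)=v(\top)=0$), and reduce the identity to $\int_0^{\top}\langle B(t)-P(t),\dot v(t)\rangle\,dt=0$ for every admissible $v$. The fundamental lemma then gives $B(t)-P(t)$ equal to a constant vector a.e., so $B$ is absolutely continuous with $\dot B(t)=A(t)$ a.e.; reading this on the three subintervals $[0,\top-\tau_1]$, $(\top-\tau_1,\top-\tau_2]$ and $(\top-\tau_2,\top]$ and inserting the piecewise expressions for $A$ and $B$ yields exactly the three stated equations.

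I expect the main obstacle to be the bookkeeping in the shifting step: one must track precisely on which subinterval each delayed partial derivative survives, and correctly truncate the domains of the shifted integrals using the vanishing of $v$ and $\dot v$ on $[-\tau_1,0]$. The break-points $\top-\tau_1$ and $\top-\tau_2$ originate solely from this truncation, and getting the piecewise definitions of $A$ and $B$ right is the crux; once they are in hand, the DuBois--Reymond step is standard, with $(A_2)$ supplying the required integrability.
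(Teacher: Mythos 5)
Your proof is correct, and it departs from the paper's at the decisive step. Both arguments begin the same way: stationarity $J^{\prime}(\bar{x}(\cdot))(v(\cdot))=0$ on the tangent cone of Lemma~\ref{l41}, then the changes of variable $t\mapsto t+\tau_{1}$ and $t\mapsto t+\tau_{2}$ to remove the delays, with the vanishing of $v$ on $[-\tau_{1},0]$ truncating the shifted integrals and producing the break-points $\top-\tau_{1}$ and $\top-\tau_{2}$. The paper then integrates the $\partial_{4}$- and $\partial_{5}$-terms by parts, which tacitly presupposes that $p_{4}=\partial_{4}L[\bar{x}]_{\tau_{1}}^{\tau_{2}}$ and $p_{5}=\partial_{5}L[\bar{x}]_{\tau_{1}}^{\tau_{2}}$ are already differentiable (under $(A_2)$ they are only known to be in $L^{2}$), and it creates a boundary term $\left\langle p_{5}(\top),v(\top-\tau_{2})\right\rangle$ that has to be killed by further restricting the test functions. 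You instead integrate the zeroth-order term by parts against $P(t)=\int_{0}^{t}A(s)\,ds$ and apply the DuBois--Reymond lemma to the identity $\int_{0}^{\top}\langle B(t)-P(t),\dot{v}(t)\rangle\,dt=0$ (legitimate, since $\dot{v}$ ranges over all of $L^{2}(I,\mathbb{R}^{N})$ with zero mean), concluding that the combination $B(t)=\partial_{4}L[\bar{x}]_{\tau_{1}}^{\tau_{2}}(t)+\partial_{5}L[\bar{x}]_{\tau_{1}}^{\tau_{2}}(t+\tau_{2})$ equals $P(t)$ plus a constant, hence is absolutely continuous with $\dot{B}=A$. That is precisely the form in which the theorem is stated (the derivative acts on the sum, not on each summand separately), it requires only $A,B\in L^{1}$, which $(A_2)$ supplies, and no stray boundary term appears. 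Your route is therefore the more careful one; as a by-product it also yields the continuity of $B$ across $\top-\tau_{1}$ and $\top-\tau_{2}$, a natural-boundary-type condition the paper does not record.
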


\begin{proof}
If $\bar{x}\left(\cdot\right)$ is a minimizer to problem \eqref{Pe}--\eqref{Pe4}, then
\begin{equation*}
J^{\prime }\left( \bar{x}\left(\cdot\right) \right) \left( v\left(\cdot\right) \right) =0
\end{equation*}
for all $v\left(\cdot\right) \in T\left(D,\bar{x}\left(\cdot\right) \right)$, that is,
\begin{equation}
\label{equ10}
\int_{0}^{ \top }\left( \left\langle p_2 (t), v\left( t\right) \right\rangle
+\left\langle  p_3 (t), v\left( t-\tau_{1}\right) \right\rangle
+\left\langle  p_4 (t),\dot{v}\left( t\right)
\right\rangle +\left\langle  p_5 (t), \dot{v}\left( t-\tau_{2}\right) \right\rangle \right) dt=0
\end{equation}
for all $v\left(\cdot\right) \in T\left( D,\bar{x}\left(\cdot\right) \right)$
with $p_{i}(t)=\partial_{i}L[\bar{x}]_{\tau_{1}}^{\tau_{2}}(t)$, $i=2,3,4,5$.
Integration by parts yields
\begin{equation}
\label{equ20}
\int_{0}^{ \top }\left\langle p_4 \left( t\right), \dot{v}\left( t\right) \right\rangle dt=-\int_{0}^{
\top }\left\langle \dot{p}_4 \left( t\right), v\left(
t\right) \right\rangle dt
\end{equation}
and
\begin{equation}
\label{equ30}
\int_{0}^{ \top }\left\langle p_5 (t), v\left( t-\tau_{2}\right) \right\rangle dt=\left\langle
p_5 (\top), v\left(  \top -\tau_{2}\right)
\right\rangle +\int_{0}^{ \top }\left\langle \dot{p}_5 (\tau) ,v\left( \tau -\tau_{2}\right) \right\rangle d\tau.
\end{equation}
By \eqref{equ10}, \eqref{equ20} and \eqref{equ30}, we obtain that
$$
\int_{0}^{\top}\left( \left\langle p_2 (t),v\left( t\right) \right\rangle
+\left\langle p_3 (t), v\left( t-\tau_{1}\right) \right\rangle
-\left\langle \dot{p}_4 (t), v\left( t\right) \right\rangle+\left\langle \dot{p}_5(t),
v\left(t-\tau_{2}\right) \right\rangle \right) dt
+\left\langle p_5 (t), v\left( \top-\tau_{2}\right)\right\rangle =0
$$
for all $v\left(\cdot\right) \in T\left( D,\bar{x}\left(\cdot\right) \right)$.
On the other hand,
\begin{equation*}
\begin{split}
\int_{0}^{\top}\left\langle p_3 \left( t\right), v\left( t-\tau_{1}\right) \right\rangle dt
&=\int_{-\tau_{1}}^{ \top -\tau_{1}}\left\langle p_3 \left(
\tau +\tau_{1}\right), v\left( \tau \right) \right\rangle d\tau\\
&=\int_{-\tau_{1}}^{0}\left\langle p_3 \left( \tau
+\tau_{1}\right), v\left( \tau \right) \right\rangle d\tau
+\int_{0}^{ \top -\tau_{1}}\left\langle p_3 \left(\tau
+\tau_{1}\right), v\left( \tau \right) \right\rangle d\tau \\
&=\int_{0}^{ \top -\tau_{1}}\left\langle p_3 \left( \tau
+\tau_{1}\right), v\left( \tau \right) \right\rangle d\tau
\end{split}
\end{equation*}
and
\begin{equation*}
\begin{split}
\int_{0}^{\top}\left\langle \dot{p}_5 \left( t\right)
,v\left( t-\tau_{2}\right) \right\rangle dt
&=\int_{-\tau_{2}}^{ \top -\tau_{2}}\left\langle \dot{p}_5\left( \tau
+\tau_{2}\right), v\left( \tau \right) \right\rangle d\tau \\
&=\int_{-\tau_{2}}^{0}\left\langle \dot{p}_5\left( \tau
+\tau_{2}\right) v\left( \tau \right) \right\rangle d\tau
+\int_{0}^{ \top -\tau_{2}}\left\langle \dot{p}_5\left( \tau
+\tau_{2}\right) ,v\left( \tau \right) \right\rangle d\tau \\
&=\int_{0}^{ \top -\tau_{2}}\left\langle \dot{p}_5\left( \tau
+\tau_{2}\right), v\left( \tau \right) \right\rangle d\tau.
\end{split}
\end{equation*}
Hence,
\begin{multline*}
\int_{0}^{\top}\left\langle p_2\left( t\right) -\dot{p}_4
\left( t\right), v\left( t\right) \right\rangle
dt+\int_{0}^{ \top -\tau_{1}}\left\langle p_3 \left(
t+\tau_{1}\right), v\left( t\right) \right\rangle
dt-\int_{0}^{ \top -\tau_{2}}\left\langle \dot{p}_5\left(
t+\tau_{2}\right), v\left( t\right) \right\rangle dt \\
+\left\langle p_5 \left( \top\right) ,
v\left( \top-\tau_{2}\right) \right\rangle =0
\end{multline*}
for all $v\left(\cdot\right) \in  T\left( D,\bar{x}\left(\cdot\right) \right)$.
Put
\begin{equation*}
\begin{split}
\bar{p_3}\left( t+\tau_{1}\right)
&=
\begin{cases}
p_3 \left( t+\tau_{1}\right) & \text{ if  } t \in \left[ 0, \top -\tau_{1}\right], \\
0 & \text{ if } t \in \, \left]\top -\tau_{1} ,\top \right],
\end{cases}\\
q\left( t+\tau_{2}\right) &=
\begin{cases}
\dot{p}_5\left( t+\tau_{2}\right) & \text{ if }
t\in \left[0, \top -\tau_{2}\right], \\
0 & \text{ if } t\in \left] \top -\tau_{2},\top \right].
\end{cases}
\end{split}
\end{equation*}
Then,
\begin{equation*}
\int_{0}^{\top}\left\langle p_2\left( t\right)
-\dot{p}_4\left( t\right) +\bar{p_3}\left( t+\tau_{1}\right)
-q\left(t+\tau_{2}\right), v\left( t\right) \right\rangle
dt+\left\langle p_5 \left( \top\right) ,
v\left( \top-\tau_{2}\right) \right\rangle =0
\end{equation*}
for all $v\left(\cdot\right) \in  T\left( D,\bar{x}\left(\cdot\right) \right)$.
In particular, for $v$ such that $v(\tau )=0$  for almost all $\tau \in \left[ -\tau_{1},0\right]$
and $v(\tau )=0$ for almost all $\tau \in \left[ \top-\tau_{2},0\right]$, we have
\begin{equation*}
p_2\left( t\right) -\dot{p}_4\left( t\right) +\bar{p_3}\left( t+\tau_{1}\right)
-q\left( t+\tau_{2}\right) =0 \quad \text{ a.e. } t \in \left[ 0,\top\right]
\end{equation*}
or
\begin{equation*}
\begin{cases}
\dot{p}_4\left( t\right) +\dot{p}_5\left( t+\tau_{2}\right)
=p_2\left( t\right)+p_3 \left( t+\tau_{1}\right)
& \text{ a.e. } t\in \left[ 0,\top -\tau_{1}\right], \\
\dot{p}_4\left( t\right) +\dot{p}_5\left( t+\tau_{2}\right)
=p_2\left( t\right)
& \text{ a.e. } t\in \, \left] \top -\tau_{1} ,\top -\tau_{2}\right],\\
\dot{p}_4\left( t\right)
= p_2\left( t\right)
& \text{ a.e. } t\in \, \left]\top -\tau_{2} ,\top \right].
\end{cases}
\end{equation*}
The proof is complete.
\end{proof}

% -----------------------------------------------

\section{Optimal control with time delays}
\label{sec:oc:td}

Now we prove existence of an optimal solution
to more general problems of optimal control with
time delays. The result is obtained
via the exterior penalty method \cite{benharratM,34:Serova}
and Theorem~\ref{th:EL1}.
The optimal control problem with time delays is defined as follows:
\begin{equation}
\label{Pce}
\min \int_{0}^{\top }
l\left(t,x(t), \dot{x}\left( t-\tau_2\right), u(t)\right) dt
\end{equation}
subject to
\begin{equation}
\label{Pce1}
\dot{x}\left( t\right) =Ax\left( t-\tau_1\right)+Bu(t),
\quad t\in[0,\top] =:I,
\end{equation}
\begin{equation}
\label{Pce2}
x(t)=\theta_{1}(t), \quad t\in[-\tau_{1},-\tau_{2}] =: I_1,
\end{equation}
\begin{equation}
\label{Pce3}
x(t)=\theta_{2}(t), \quad t\in[-\tau_{2},0] =: I_2,
\end{equation}
and
\begin{equation}
\label{Pce4}
x(\top)=\alpha,
\end{equation}
where $x(\cdot)\in\mathcal{H}$, $u(\cdot)\in U_{0}=\left\{ u\left(\cdot\right)
\in L^{2}\left([0,\top],U\right) : u\left( 0\right) =0\right\}$, $A$ is an $N\times N$ matrix,
$B$ is an $N\times m$ matrix, and $l :[0, \top] \times \mathbb{R}^{N} \times \mathbb{R}^{N}\times \mathbb{R}^{m}
\rightarrow \mathbb{R}$, $\left( t,a,\bar{b},c\right) \rightarrow l\left( t,a,\bar{b},c\right)$.
The final time $\top > 0$ is fixed in $\mathbb{R}$, $\tau_{1}$ and $\tau_{2}$ are two given positive real
numbers such that $\tau_{2}< \tau_{1}<\top$ and, as before, $\theta_{1}(\cdot)$ and $\theta_{2}(\cdot)$
are given piecewise smooth functions. In the sequel, we denote by $\theta(\cdot)$ the function defined by
$\theta (t)=\theta_{1}(t)$,  $t\in I_1$, and $\theta(t)=\theta_{2}(t)$, $t\in I_2$.
We make the following assumptions on the data of the problem:
\begin{itemize}
\item[$(H_1)$] The mapping $l$ is a $C^{1}$-Carath\'{e}dory mapping, i.e.,
$l$ is $C^{1}$ in $\left(a,\bar{b},c \right)$ for almost all $t \in \left[ 0,T\right]$
and is measurable in $t$ for every  $\left( a,\bar{b},c \right)
\in  \mathbb{R}^{N}\times \mathbb{R}^{N}\times \mathbb{R}^{m}$;

\item[$(H_2)$] there exist $\gamma_i\left(\cdot\right)
\in  L^{2}\left( I,\mathbb{R}^{+}\right)$, $i=1, \ldots, 5$, such that
$$
\left\vert l\left( t,a,\bar{b},c\right) \right\vert
\leq \gamma_1 \left( t\right)
\quad \text{ and } \quad \left\Vert \partial_{i}
L\left( t,a,\bar{b},c\right) \right\Vert
\leq \gamma_{i} \left( t\right)  \text{ a.e. } t \in  I,
\ i=2, \ldots, 4,
$$
where $\partial_{i}L$ is the partial derivative of $l$
with respect to its $i$th argument, $i = 1, \ldots, 4$;
\item[$(H_3)$] there exists $\rho >0$ such that for almost all $t \in \left[ 0,T\right]$
and for all $\left(a,\bar{b},c \right) \in  \mathbb{R}^{N}\times \mathbb{R}^{N}\times \mathbb{R}^{m}$
$$
l\left(a,\bar{b},c \right)\geq \rho \left\| c \right\|_{\mathbb{R}^{m}};
$$
\item[$(H_4)$] $l\left(a,\bar{b},c\right)$ is convex in $(\bar{b},c)$.
\end{itemize}
Using the exterior penalty function method, we consider the following
sequence of unconstrained optimal control problems corresponding to
\eqref{Pce}--\eqref{Pce4}:
\begin{equation}
\label{P:n}
\tag{$\mathcal{P}_{n}$}
\begin{gathered}
\inf \int_{0}^{\top} l\left(t,x\left( t\right),
\dot{x}\left( t-\tau_2\right), u\left( t\right) \right) dt+\frac{c_{n}}{2}
\int_{0}^{\top}\left\Vert \dot{x}\left( t\right) - Ax\left(t-\tau_1\right)
-Bu\left( t\right)\right\Vert_{\mathbb{R}^{N}}^{2}dt,\\
x(t)=\theta_{1}(t) \ \text{ a.e. } t\in I_1,\\
x(t)=\theta_{2}(t) \  \text{ a.e. } t\in I_2,\\
x\left(\top\right) =\alpha,\\
x\left(\cdot\right) \in \mathcal{H},
\quad u\left(\cdot\right) \in U_{0},
\end{gathered}
\end{equation}
where $c_{n+1}\geq c_n$, $c_n \rightarrow \infty$. Denote
\begin{gather*}
L_{n}\left( t,a,\bar{a},b,\bar{b}, c\right) := l\left( t,a,\bar{b},c\right)
+\frac{c_{n}}{2}\left\Vert b-A\bar{a}+Bc
\right\Vert_{\mathbb{R}^{N}}^{2},\\
J_{n}\left( x\left(\cdot\right),u(\cdot)\right)
:= \int_{0}^{\top} L_{n}\left( t,x\left( t\right) ,x\left( t-\tau_1\right),
\dot{x}\left( t\right), \dot{x}\left( t-\tau_2 \right), u(t)\right),\\
D:=\left\{ x\left(\cdot\right) \in \mathcal{H}: x\left(\top\right)
=\alpha \right\}.
\end{gather*}
The sequence of unconstrained optimal control problems takes then the following form:
\begin{equation}
\label{P:n:2}
\tag{$\mathcal{P}_{n}$}
\begin{cases}
\inf J_{n}\left( x\left(\cdot\right),u(\cdot)\right),\\
x\left(\cdot\right) \in D,\\
u(\cdot)\in U_{0},
\end{cases}
\end{equation}
$n\in \mathbb{N}$.

\begin{lemma}
\label{l4a}
The cone of tangents $T\left( U_{0},u\left(\cdot\right) \right)$ is given by
$$
T\left( U_{0},u\left(\cdot\right) \right) =\left\{ w(\cdot)
\in L^{2}\left( I,\mathbb{R}^{m}\right) : w\left( 0\right)
=0\right\}.
$$
\end{lemma}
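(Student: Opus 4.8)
The plan is to mirror the argument used for $D$ in Lemma~\ref{l41}, exploiting that $U_{0}$ is itself a linear subspace of $L^{2}(I,\mathbb{R}^{m})$, cut out by the single condition $u(0)=0$. Writing $K:=\{w(\cdot)\in L^{2}(I,\mathbb{R}^{m}): w(0)=0\}$, I would establish the two inclusions $T(U_{0},u(\cdot))\subset K$ and $K\subset T(U_{0},u(\cdot))$ separately and conclude equality.

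For the inclusion $T(U_{0},u(\cdot))\subset K$, I would take $w(\cdot)\in T(U_{0},u(\cdot))$ and invoke Definition~\ref{def:cone:tang}: there exist a sequence $(u_{n}(\cdot))\subset U_{0}$ with $u_{n}(\cdot)\to u(\cdot)$ strongly in $L^{2}(I,\mathbb{R}^{m})$ and nonnegative scalars $\alpha_{n}$ such that $\alpha_{n}(u_{n}(\cdot)-u(\cdot))\to w(\cdot)$. Since each $u_{n}$ and $u$ belong to $U_{0}$, they satisfy $u_{n}(0)=u(0)=0$, so $\alpha_{n}(u_{n}(0)-u(0))=0$ for every $n$; passing to the limit forces $w(0)=0$, that is, $w(\cdot)\in K$. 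This is exactly the step analogous to the treatment of the fixed endpoint and interval constraints in Lemma~\ref{l41}.

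For the reverse inclusion $K\subset T(U_{0},u(\cdot))$, I would fix $w(\cdot)\in K$ and set $u_{n}(\cdot):=u(\cdot)+\tfrac{1}{n}w(\cdot)$. Then $u_{n}(0)=u(0)+\tfrac{1}{n}w(0)=0$, so $u_{n}(\cdot)\in U_{0}$, and clearly $u_{n}(\cdot)\to u(\cdot)$ in $L^{2}(I,\mathbb{R}^{m})$. Choosing $\alpha_{n}=n$ gives $\alpha_{n}(u_{n}(\cdot)-u(\cdot))=w(\cdot)$, which converges trivially to $w(\cdot)$. By Definition~\ref{def:cone:tang}, $w(\cdot)\in T(U_{0},u(\cdot))$, and the two inclusions together yield the claimed identity.

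The step requiring the most care is the passage to the limit $w(0)=0$ in the first inclusion: strong $L^{2}$ convergence by itself only gives convergence almost everywhere along a subsequence, so the pointwise condition at $t=0$ must be read in the same spirit as the segment and endpoint conditions in Lemma~\ref{l41}, where $U_{0}$ is understood as a closed subspace on which evaluation at $0$ is a well-defined linear constraint. Apart from this delicate reading of the trace condition, the argument is entirely parallel to Lemma~\ref{l41} and involves no further obstacle.
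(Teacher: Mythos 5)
Your proof is correct and follows exactly the route the paper intends: the paper's own proof of Lemma~\ref{l4a} consists solely of the remark that it is ``similar to the proof of Lemma~\ref{l41}'', and your two-inclusion argument (constraint preservation under the scaled differences for one direction, the perturbation $u(\cdot)+\tfrac{1}{n}w(\cdot)$ for the other) is precisely that adaptation. Your closing caveat about the meaning of the pointwise condition $w(0)=0$ for $L^{2}$ functions is a fair observation, but it is a delicacy already present in the paper's definition of $U_{0}$ itself and not a defect of your argument.
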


\begin{proof}
Similar to the proof of Lemma~\ref{l41}.
\end{proof}

It is well known that the penalty function method is a very effective technique for solving
constrained optimization problems via unconstrained ones. The main question is the convergence
of the sequence of solutions of the unconstrained optimal control problems to the original/constrained
problem. Before giving the convergence theorem, we begin with some preparatory results, which are
a direct consequence of the necessary optimality conditions given by Theorem~\ref{th:EL1}.

\begin{proposition}
\label{pro4.1}
For every $n$, if $\left( x_{n}\left(\cdot\right),u_{n}\left(\cdot\right)\right)
\in D \times  U_{0}$ is an optimal solution to $\left(\mathcal{P}_{n}\right)$, then
\begin{enumerate}
\item
$$
\begin{cases}
\frac{d}{dt}\phi _{n}\left( t\right) =A^{*} \phi_{n}\left( t
+\tau_1\right) +\frac{1}{c_{n}}a_{n}\left( t\right)
-\frac{1}{c_{n}}e_{n}\left( t\right)
& \text{ a.e. } t\in \left[ 0, \top -\tau_1\right],\\
\frac{d}{dt}\phi_{n}\left( t\right)
=\frac{1}{c_{n}}a_{n}\left( t\right)
-\frac{1}{c_{n}}e_{n}\left( t\right)
& \text{ a.e. } t\in \, \left]  \top -\tau_1,\top -\tau_2\right],\\
\frac{d}{dt}\phi_{n}\left( t\right) =\frac{1}{c_{n}}a_{n}\left(t\right)
& \text{ a.e. } t\in \, \left]  \top -\tau_2,\top \right], \\
B^{\star }\phi _{n}\left( t\right)
=\frac{1}{c_{n}} b_{n}\left( t\right)
& \text{ a.e. } t\in \left[ 0, \top \right],
\end{cases}
$$
where
\begin{equation*}
\begin{split}
\phi _{n}\left( t\right) &=\dot{x}_{n}\left( t\right)-A
x_{n}\left( t-\tau_1\right) +Bu_{n}\left( t\right),\\
a_{n}\left( t\right)
&=l_{a}^{\prime }\left( t,x_{n}\left( t\right),
\dot{x}_{n}\left( t-\tau_2\right),u_{n}\left( t\right) \right),\\
e_{n}\left( t\right)
&=l_{\bar{b}}^{\prime }\left( t,x_{n}\left( t\right),
\dot{x}_{n}\left( t-\tau_2\right),u_{n}\left( t\right) \right),\\
b_{n}\left( t\right)
&=l_{c}^{\prime }\left( t,x_{n}\left( t\right),
\dot{x}_{n}\left( t-\tau_2\right),u_{n}\left( t\right) \right);
\end{split}
\end{equation*}
\item there exists $M>0$ such that $\left\Vert \phi _{n}\left( t\right)
\right\Vert _{X}\leq M$ for all $t\in \left[0, \top\right]$
and all $n$ sufficiently large.
\end{enumerate}
\end{proposition}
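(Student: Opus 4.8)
The plan is to treat $(\mathcal{P}_n)$ as a problem in the pair of unknowns $(x(\cdot),u(\cdot))$ and to obtain part~(1) by freezing one variable at a time. First I would fix $u=u_n$ and regard $x\mapsto J_n(x,u_n)$ as a delayed variational functional with Lagrangian $L_n$. Since $(x_n,u_n)$ is optimal and the feasible set for $x$ is the affine space $D$, the restricted functional attains its minimum at $x_n$, so Theorem~\ref{th:EL1} applies verbatim with $L$ replaced by $L_n$. The one computation to carry out is the evaluation of the partial derivatives of $L_n$ along the optimal trajectory. Writing $\phi_n(t)=\dot{x}_n(t)-Ax_n(t-\tau_1)+Bu_n(t)$ for the (scaled) constraint residual, one finds $\partial_2 L_n=a_n$ and $\partial_5 L_n=e_n$ from the $l$-part, while the quadratic penalty contributes $\partial_4 L_n=c_n\phi_n$ and $\partial_3 L_n=-c_n A^{*}\phi_n$. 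Substituting these into the three Euler--Lagrange equations of Theorem~\ref{th:EL1} and dividing through by $c_n$ produces the first three identities of part~(1).

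Next I would fix $x=x_n$ and vary $u$. By Lemma~\ref{l4a} the admissible directions are $w\in L^{2}(I,\mathbb{R}^{m})$ with $w(0)=0$, and stationarity of $u\mapsto J_n(x_n,u)$ gives $\int_0^{\top}\langle b_n(t)+c_n B^{*}\phi_n(t),w(t)\rangle\,dt=0$ for all such $w$. The fundamental lemma of the calculus of variations then yields the algebraic relation $B^{*}\phi_n(t)=\tfrac{1}{c_n}b_n(t)$ a.e.\ on $I$, completing part~(1). Here I invoke $(H_1)$--$(H_2)$ to guarantee, exactly as in the Fréchet-differentiability computation established above, that $a_n$, $e_n$, $b_n$ are well defined and lie in $L^{2}$.

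For part~(2) I would first extract an $L^{2}$-estimate on $\phi_n$ from the penalty structure. Fix any pair $(\hat{x},\hat{u})$ admissible for the original problem \eqref{Pce}--\eqref{Pce4}; it satisfies the dynamics \eqref{Pce1} exactly, so its penalty term vanishes and $J_n(\hat{x},\hat{u})=\int_0^{\top} l(t,\hat{x},\dot{\hat{x}}(\cdot-\tau_2),\hat{u})\,dt=:C$ is a finite constant independent of $n$, nonnegative by $(H_3)$. Optimality of $(x_n,u_n)$ gives $J_n(x_n,u_n)\le J_n(\hat{x},\hat{u})$, and the running cost $\int_0^{\top} l(t,x_n,\dot{x}_n(\cdot-\tau_2),u_n)\,dt\ge 0$ again by $(H_3)$. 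Hence $\frac{c_n}{2}\int_0^{\top}\|\phi_n(t)\|^{2}\,dt\le C$, so that $\|\phi_n\|_{L^{2}}^{2}\le \frac{2C}{c_n}\to 0$ as $c_n\to\infty$.

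Finally I would upgrade this $L^{2}$ bound to a uniform pointwise bound using the equations of part~(1). Those equations express $\dot{\phi}_n$ on each subinterval as $A^{*}\phi_n(\cdot+\tau_1)$ (where present) plus $\frac{1}{c_n}$ times combinations of $a_n$ and $e_n$; since $\|a_n\|_{L^{2}}\le\|\gamma_2\|_{L^{2}}$ and $\|e_n\|_{L^{2}}\le\|\gamma_3\|_{L^{2}}$ are bounded by $(H_2)$, the triangle inequality gives $\|\dot{\phi}_n\|_{L^{2}}\le \|A\|\,\|\phi_n\|_{L^{2}}+\frac{1}{c_n}\bigl(\|\gamma_2\|_{L^{2}}+\|\gamma_3\|_{L^{2}}\bigr)\to 0$. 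Thus $(\phi_n)$ is bounded in $H^{1}(I,\mathbb{R}^{N})$, and the one-dimensional Sobolev embedding $H^{1}(I,\mathbb{R}^{N})\hookrightarrow C(\overline{I},\mathbb{R}^{N})$ yields a constant $M$ with $\sup_{t}\|\phi_n(t)\|\le M$ for all large $n$. I expect this last passage to be the main obstacle: the penalty estimate controls $\phi_n$ only in $L^{2}$, and it is the differential system from part~(1) — together with the uniform $L^{2}$-bounds $(H_2)$ on the derivatives of $l$ — that supplies the extra regularity needed to invoke the embedding and conclude a genuine sup-norm bound.
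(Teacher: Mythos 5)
Your part (1) follows the paper's proof: the paper likewise obtains all four conditions of item~1 by combining Lemma~\ref{l41}, Lemma~\ref{l4a} and Theorem~\ref{th:EL1} applied to $L_{n}$ with one of the two variables frozen, and you in fact supply the partial-derivative computations that the paper omits entirely. Part (2) is where you genuinely diverge. The paper makes no use of the penalty energy estimate: it first bounds $\Vert\phi_{n}(0)\Vert$ by a constant $k$ uniform in $n$ (arguing from $u_{n}(0)=0$ and the existence of $\dot{\theta}(0^{+})$), then writes $\phi_{n}(t)=\phi_{n}(0)+A^{*}\int_{0}^{t}\phi_{n}(\tau+\tau_{1})\,d\tau+\frac{1}{c_{n}}\int_{0}^{t}(a_{n}(\tau)+e_{n}(\tau))\,d\tau$ on $[0,\top-\tau_{1}]$, applies Gronwall's lemma there, and finally propagates the bound through $]\top-\tau_{1},\top-\tau_{2}]$ and $]\top-\tau_{2},\top]$ using the remaining two equations, ending with $M_{n}=k\exp(\alpha(\top-\tau_{1}))+R_{n}(1+\exp(\alpha(\top-\tau_{1})))$ and $R_{n}\to 0$. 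Your route --- the comparison $\frac{c_{n}}{2}\Vert\phi_{n}\Vert_{L^{2}}^{2}\le C$ against an admissible pair, an $L^{2}$ bound on $\dot{\phi}_{n}$ read off from the equations of item~1, and the embedding $H^{1}(I,\mathbb{R}^{N})\hookrightarrow C(I,\mathbb{R}^{N})$ --- is arguably cleaner: it avoids the paper's delicate claim that $\dot{x}_{n}(0)$ is controlled by the initial data, it sidesteps a Gronwall inequality whose integrand involves the advanced argument $\phi_{n}(\tau+\tau_{1})$, and it delivers the stronger conclusion $\sup_{t}\Vert\phi_{n}(t)\Vert\to 0$. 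The price is one extra hypothesis: you need at least one admissible pair for \eqref{Pce}--\eqref{Pce4} so that $C$ is finite. That assumption is not stated in the proposition itself, but it is guaranteed by the finite-value hypothesis of Theorem~\ref{thm4.1}, where the proposition is applied, so the gap is only formal; it would be worth stating explicitly. (Both your proof and the paper's pass silently over the sign and derivative discrepancies between what literal substitution into Theorem~\ref{th:EL1} yields and the identities as printed in item~1, e.g.\ the appearance of $e_{n}(t)$ rather than $\frac{d}{dt}e_{n}(t+\tau_{2})$.)
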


\begin{proof}
1) Let $\left( x_{n}\left(\cdot\right) ,u_{n}\left(\cdot\right) \right) \in  D
\times  U_{0}$ be an optimal solution to $\left(\mathcal{P}_{n}\right)$.
Then, by Lemma~\ref{l41}, Lemma~\ref{l4a} and Theorem~\ref{th:EL1},
we obtain the necessary conditions of item 1 for problem $\left(\mathcal{P}_{n}\right)$.

2) Since  $u_{n}(0)=0$ and  $\dot{\theta}\left( 0^{+}\right) $ exists,   $\dot{x}_{n}\left( 0\right) $
is defined and  there exists $k >0$ such that  $\left\Vert \phi _{n}\left( 0\right)
\right\Vert \leq k$. By the first equation of item 1, we have
\begin{equation*}
\phi_{n}\left( t\right) =\phi _{n}\left( 0\right)
+A^{* }\int_{0}^{t} \phi _{n}\left( \tau
+\tau_1\right) d\tau +\frac{1}{c_{n}}\int_{0}^{t}\left(a_{n}\left(
\tau \right) +e_{n}\left(\tau \right)\right)d\tau,
\quad t\in \left[ 0,\top -\tau_1\right].
\end{equation*}
Consequently,
\begin{equation*}
\begin{split}
\left\Vert \phi _{n}\left( t\right) \right\Vert
&\leq \left\Vert
\phi_{n}\left( 0\right) \right\Vert +\left\Vert
A^{*}\right\Vert\int_{0}^{t} \left\Vert \phi _{n}\left(
\tau +\tau_1\right) \right\Vert d\tau +R_{n}\\
&\leq k+\alpha\int_{0}^{t} \left\Vert
\phi_{n}\left( \tau +\tau_1\right) \right\Vert d\tau +R_{n}
\end{split}
\end{equation*}
for all $t\in \left[ 0, \top -\tau_1\right]$
with $k=\left\Vert \phi _{n}\left( 0\right) \right\Vert$, $\alpha=\left\Vert
A^{*}\right\Vert$, and
$R_{n}=\frac{1}{c_{n}}\left(\left\Vert \gamma_2 \left(\cdot\right)\right\Vert_{L^{2}}
+\left\Vert \gamma_3 \left(\cdot\right) \right\Vert_{L^{2}}\right)$.
By Gronwall's lemma, we obtain that
\begin{equation}
\label{equ3.4}
\left\Vert \phi _{n}\left( t\right) \right\Vert \leq \left( k+R_{n}\right)
\exp \left(\alpha \left(\top - \tau_1\right)\right)
\quad  \text{ for all } t \in \left[0, \top -\tau_1\right].
\end{equation}
The second and third equalities of item 1 give
\begin{equation}
\label{equ3.5}
\left\Vert \phi _{n}\left( t\right) \right\Vert \leq \left\Vert
\phi_{n}\left(\top -\tau_1\right) \right\Vert +R_{n}
\quad \text{ for all } t\in \left] \top -\tau_1,\top\right].
\end{equation}
Now, the inequalities \eqref{equ3.4} and \eqref{equ3.5} imply that
$$
\left\Vert \phi _{n}\left( t\right) \right\Vert \leq M_{n}
\text{ for all } n \text{ and  for all } t\in \left[ 0,\top \right]
$$
with
$$
M_{n} =k\exp \left(\alpha \left(\top - \tau_1\right)\right)
+R_n (1+\exp \left(\alpha \left(\top - \tau_1\right) \right)).
$$
Since $R_{n}\rightarrow 0$, there exists $M>0$ such that
$$
\left\Vert \phi _{n}\left( t\right)
\right\Vert _{\mathbb{R}^N }\leq M
$$
for all $t\in \left[ 0, \top \right]$ and for all $n$ large.
\end{proof}

We are now ready to prove the convergence theorem, which reads as follows.

\begin{theorem}[Penalty convergence theorem]
\label{thm4.1}
If hypotheses $(H_1)$--$(H_4)$ hold and problem
\eqref{Pce}--\eqref{Pce4} has a finite value, then the sequence
$(x_{n}\left( \cdot \right) ,u_{n}\left(\cdot\right) )_{n}$ of solutions
to $\left( P_{n}\right)$ contains a subsequence
$(x_{k}\left(\cdot\right) ,u_{k}\left(\cdot\right) )_{k}$ such that
\begin{itemize}
\item $x_{k}\left(\cdot\right) \longrightarrow x\left(\cdot\right)$
strongly in $C\left( I,\mathbb{R}^{N}\right)$;

\item $u_{k}\left(\cdot\right) \longrightarrow u\left(\cdot\right)$
weakly in $L^{2}\left(I,\mathbb{R}^{m}\right)$;

\item $\dot{x}_{k}\left(\cdot\right) \longrightarrow \dot{x}\left(\cdot\right)$
weakly in $L^{2}\left(I,\mathbb{R}^{N}\right)$;
\end{itemize}
with $\left( x\left(\cdot\right), u\left(\cdot\right)\right)$
a solution to problem \eqref{Pce}--\eqref{Pce4}.
\end{theorem}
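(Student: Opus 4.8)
The plan is to carry out the standard exterior-penalty convergence argument: derive a priori bounds from minimality, extract weak and strong limits, pass to the limit in the dynamics to obtain feasibility, and conclude optimality by weak lower semicontinuity. Write $\nu$ for the (finite) value of \eqref{Pce}--\eqref{Pce4} and let $(x_n(\cdot),u_n(\cdot))$ be a minimizer of $(\mathcal{P}_n)$. Any pair $(x,u)$ that is feasible for \eqref{Pce}--\eqref{Pce4} lies in $D\times U_0$ and annihilates the penalty term, so $J_n(x,u)=\int_0^{\top}l\,dt$; minimality then gives $J_n(x_n,u_n)\le\nu$. Because $(H_3)$ forces $l\ge\rho\|c\|\ge 0$, both summands of $J_n$ are nonnegative, so simultaneously $\int_0^{\top}l(t,x_n,\dot x_n(\cdot-\tau_2),u_n)\,dt\le\nu$ and
\[
\int_0^{\top}\bigl\|\dot x_n(t)-Ax_n(t-\tau_1)-Bu_n(t)\bigr\|_{\mathbb{R}^N}^2\,dt\le\frac{2\nu}{c_n}\longrightarrow 0 .
\]
This last estimate is the heart of the method: the constraint residual tends to $0$ in $L^2(I,\mathbb{R}^N)$, which will yield feasibility of the limit.

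Next I would extract a convergent subsequence from uniform a priori bounds. The cost bound with $(H_3)$ gives $\|u_n\|_{L^1}\le\nu/\rho$; to upgrade this to a uniform $L^2$ bound I would use the pointwise estimate $\|\phi_n(t)\|\le M$ of Proposition~\ref{pro4.1}(2), write $Bu_n=\dot x_n-Ax_n(\cdot-\tau_1)-\phi_n$, and control $x_n$ and $\dot x_n$ through a Gronwall estimate on the delayed integral equation, using injectivity of $B$. This furnishes $\|u_n\|_{L^2}+\|\dot x_n\|_{L^2}+\|x_n\|_{H^1}\le C$. By reflexivity of $L^2$ I pass to a subsequence with $u_k\rightharpoonup u$ and $\dot x_k\rightharpoonup\xi$ weakly in $L^2$; the compact embedding $H^1(I)\hookrightarrow C(I)$ gives $x_k\to x$ strongly in $C(I,\mathbb{R}^N)$, and closedness of the distributional derivative under weak convergence identifies $\xi=\dot x$.

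It remains to verify feasibility and optimality of $(x,u)$. In the residual $\dot x_k-Ax_k(\cdot-\tau_1)-Bu_k$ the middle term converges strongly (by uniform convergence of $x_k$) while $\dot x_k\rightharpoonup\dot x$ and $Bu_k\rightharpoonup Bu$; since the residual also tends to $0$ strongly, its weak limit yields $\dot x=Ax(\cdot-\tau_1)+Bu$ a.e., and the constraints \eqref{Pce2}--\eqref{Pce4} pass to the limit by uniform convergence, so $(x,u)$ is feasible. For optimality I invoke the classical weak lower semicontinuity theorem for convex integral functionals: $l$ is $C^1$-Carath\'eodory by $(H_1)$ and convex in $(\bar b,c)$ by $(H_4)$, while $x_k\to x$ strongly controls the nonconvex argument and $\dot x_k(\cdot-\tau_2)\rightharpoonup\dot x(\cdot-\tau_2)$, $u_k\rightharpoonup u$ weakly; hence $\int_0^{\top}l(t,x,\dot x(\cdot-\tau_2),u)\,dt\le\liminf_k\int_0^{\top}l(t,x_k,\dot x_k(\cdot-\tau_2),u_k)\,dt\le\nu$. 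Feasibility together with this bound forces $(x,u)$ to attain $\nu$, so it solves \eqref{Pce}--\eqref{Pce4}.

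The delicate step is the uniform $L^2$ bound on the controls: the coercivity $(H_3)$ is only linear and by itself yields no more than an $L^1$ bound on $u_n$, so the upgrade to $L^2$ has to be squeezed out of the uniform bound on $\phi_n$ and the linear delay dynamics (and implicitly requires $B$ to be injective). A second point needing care is the lower semicontinuity in the presence of the delayed velocity $\dot x(t-\tau_2)$, which converges only weakly; convexity in $(\bar b,c)$ is precisely the hypothesis that makes the convex-integrand theorem applicable, and one cannot replace it by naive weak continuity.
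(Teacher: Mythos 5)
Your overall architecture matches the paper's: a priori bounds built on Proposition~\ref{pro4.1}, extraction of a subsequence with $x_k\to x$ in $C(I,\mathbb{R}^N)$ (Ascoli/compact embedding) and $\dot x_k\rightharpoonup \dot x$, $u_k\rightharpoonup u$ in $L^2$, then feasibility and optimality of the limit pair. Where you genuinely diverge is the feasibility step, and your route is cleaner: since $l\ge 0$ by $(H_3)$, minimality gives $\frac{c_n}{2}\int_0^{\top}\|\dot x_n-Ax_n(\cdot-\tau_1)-Bu_n\|^2\,dt\le\nu$, so the residual tends to $0$ strongly in $L^2$ and its weak limit identifies $\dot x=Ax(\cdot-\tau_1)+Bu$. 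The paper instead passes the stationarity relation $B^{*}\phi_k=\frac{1}{c_k}b_k$ of Proposition~\ref{pro4.1} to the limit, obtains $B^{*}\phi=0$, and then asserts $\phi=0$, which tacitly requires $\ker B^{*}=\{0\}$; your argument needs no such condition. Your closing lower-semicontinuity step (the convex-integrand theorem using $(H_4)$, with strong convergence in the state slot and weak convergence in $(\dot x(\cdot-\tau_2),u)$) is also a more accurate justification than the paper's appeal to Lebesgue's theorem.

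The one step that does not go through as you describe is the uniform $L^2$ bound on $(u_n)$. You rightly observe that the linear coercivity $(H_3)$ only yields $\|u_n\|_{L^1}\le\nu/\rho$, but your proposed upgrade is circular: to bound $u_n$ in $L^2$ from $Bu_n=\dot x_n-Ax_n(\cdot-\tau_1)-\phi_n$ you need $\dot x_n$ bounded in $L^2$, while the only available pointwise estimate on $\dot x_n$ is $\|\dot x_n(t)\|\le M+\beta\psi+\sigma\|u_n(t)\|$, so the two bounds chase each other and the constants do not close (injectivity of $B$ gives $c\|u_n(t)\|\le\|Bu_n(t)\|$ with $c\le\|B\|=\sigma$, so nothing is gained). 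The $L^1$ bound does suffice for the Gronwall estimate on $\sup_t\|x_n(t)\|$, but not for the weak $L^2$ compactness of $(u_n)$ and $(\dot x_n)$ on which the rest of the argument rests. To be fair, the paper has the same hole: it asserts $\|u_n\|_{L^2}\le K$ ``by assumption $(H_5)$'', a hypothesis that is never stated; what is actually needed is a superlinear coercivity such as $l\ge\rho\|c\|^2$, under which both your proof and the paper's close immediately.
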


\begin{proof}
Let $\left(x_{n}\left(\cdot\right), u_{n}\left(\cdot\right) \right) \in  D
\times  U_{0}$ be an optimal solution to $\left(\mathcal{P}_{n}\right)$
for every $n$. By Proposition~\ref{pro4.1},
\begin{eqnarray*}
\left\Vert \dot{x}_{n}\left( t\right) \right\Vert &\leq &M+\left\|A\right\| \left\Vert
x_{n}\left( t-\tau_1\right) \right\Vert + \left\|B\right\| \left\|u_{n}\left( t\right)\right\|\\
&\leq &M +\beta \left\Vert x_{n}\left( t-\tau_1\right)
\right\Vert +\sigma \left\Vert u_{n}\left( t\right) \right\Vert.
\end{eqnarray*}
Because
\begin{equation*}
\begin{cases}
x_{n}\left( t\right) =\theta \left( 0\right)
+\int_{0}^{t} \dot{x}_{n}\left( \tau \right) d\tau
&\forall t\in \left[0, \top\right],\\
x_{n}\left( t\right) =\theta \left( t\right)
&\text{a.e. } t\in \left[ -\tau_1,0\right],
\end{cases}
\end{equation*}
it follows that
\begin{equation*}
\begin{split}
\left\Vert x_{n}\left( t\right) \right\Vert
&\leq \left\Vert \theta
\left( 0\right) \right\Vert +\int_{0}^{t}\left\Vert
\dot{x}_{n}\left( \tau \right) \right\Vert d\tau\\
&\leq \left\Vert \theta \left( 0\right)
\right\Vert +M \top + \beta \int_{0}^{t}\left\Vert x_{n}\left(
\tau -h\right) \right\Vert d\tau
+\sigma \int_{0}^{t}\left\Vert
u_{n}\left( \tau \right) \right\Vert d\tau.
\end{split}
\end{equation*}
On the other hand, if $\mathcal{M}$ denote the finite value of \eqref{Pce}--\eqref{Pce4}, then
\begin{equation*}
\int_{0}^{\top} l\left(t,x_{n}\left( t\right),
u_{n}\left( t\right) \right) dt
\leq J_{n}\left(x_{n}\left(\cdot\right) ,u_{n}(\cdot)\right) \leq \mathcal{M}.
\end{equation*}
By assumption $(H_5)$, there exists $K>0$ such that
\begin{equation*}
\left\Vert u_{n}\left(\cdot\right) \right\Vert_{L^{2}}\leq K.
\end{equation*}
Thus,
\begin{equation*}
\left\Vert x_{n}\left( t\right) \right\Vert \leq \left\Vert \theta \left( 0\right)
\right\Vert +M
\top +\sigma \top K + \beta
\int_{-\tau_1}^{0}\left\Vert \theta \left( \tau \right) \right\Vert d\tau
+\beta \int_{0}^{t-\tau_1}\left\Vert x_{n}\left( \tau \right)
\right\Vert d\tau.
\end{equation*}
By Gronwall's lemma, we obtain that
\begin{equation}
\label{equ3.33}
\left\Vert x_{n}\left( t\right) \right\Vert\leq \psi \quad \quad
 \text{ for   } n \text{ sufficiently large  and  for all } t\in \left[ 0, \top
\right],
\end{equation}
where
$$
\psi =\left( \left\Vert \theta \left( 0\right) \right\Vert
+M \top +\sigma \top K
+\beta \tau_1\left\Vert \theta \left(\cdot\right)
\right\Vert \right) \exp \left( \beta \left( \top
-\tau_1\right) \right).
$$
Similarly, for $n$ sufficiently large,
\begin{equation*}
\left\Vert \dot{x}_{n}\left( t\right) \right\Vert \leq M
+\beta \left\Vert x_{n}\left( t-\tau_1\right) \right\Vert +\sigma
\left\Vert u_{n}\left( t\right) \right\Vert.
\end{equation*}
For all  $t\in \left[ 0,h\right] $, we have
\begin{equation*}
\left\Vert \dot{x}_{n}\left( t\right) \right\Vert \leq M+\gamma \left(
t\right) +\beta \left\Vert \theta \left( t-\tau_1\right) \right\Vert +\sigma
\left\Vert u_{n}\left( t\right) \right\Vert =\omega \left( t\right).
\end{equation*}
Since $\omega \left(\cdot\right) \in L^{2}\left(I,\mathbb{R}\right)$
and $\left( u_{n}\left(\cdot\right) \right) _{n}$ is bounded in
$L^{2}\left(  I,\mathbb{R}^{m}\right)$, with $\left[ 0,h\right]$
of finite measure, there exists $\varrho >0$ such that
\begin{equation}
\label{equ3.6}
\left\Vert \dot{x}_{n}\left(\cdot\right) \right\Vert
\leq \varrho \ \ \ \ \text{ in } L^{2}\left( \left[ 0,\tau_1\right],  \mathbb{R}^N
\right), \text{ for } n\text{ sufficiently large}.
\end{equation}
For all  $t\in \left[ \tau_1, \top \right]$ we have
\begin{equation*}
\left\Vert \dot{x}_{n}\left( t\right) \right\Vert \leq M
+\gamma \left(t\right) +\beta \psi
+\sigma \left\Vert u_{n}\left( t\right) \right\Vert.
\end{equation*}
As before, we can assert that
\begin{equation}
\label{equ3.7}
\exists \hat{\varrho}>0\ :\ \left\Vert \dot{x}_{n}\left(\cdot\right)
\right\Vert \leq \hat{\varrho}
\quad \text{ in } L^{2}\left( \left[
\tau_1, \top\right] \mathbb{R}^N\right)
\text{ for } n \text{ sufficiently large}.
\end{equation}
By \eqref{equ3.6} and \eqref{equ3.7}, there exists  $\eta >0$ such that
\begin{equation*}
\left\Vert \dot{x}_{n}\left(\cdot\right) \right\Vert \leq \eta
\end{equation*}
in $L^{2}\left( \left[ 0, \top \right], \mathbb{R}^N\right)$
for $n$  sufficiently large. Therefore, there exists a subsequence
$\left( \dot{x}_{k}\left(\cdot\right)
\right) _{k}$ of $\dot{x}_{n}\left(\cdot\right)_{n}$
converging to
$\sigma \left(\cdot\right) \in L^{2}\left( I,\mathbb{R}^N\right)$.
Since $x_{n}\left( t\right) =\theta \left( 0\right) +\int_{0}^{t}
\dot{x}_{n}\left( \tau \right) d\tau $ for all $t\in I$,
by the use of  \eqref{equ3.33}, the sequence $\left( x_{n}\left(\cdot\right) \right)_{n}$
is equi-bounded and equi-continuous (because $\left( \dot{x}_{n}\left(\cdot\right) \right)_{n}$
is bounded in $ L^{2}\left(I,\mathbb{R}^N\right)$). Ascoli's theorem implies that
\begin{equation*}
x_{k}\left(\cdot\right) \longrightarrow x\left(\cdot\right)
\quad \text{ strongly in } C\left( I,\mathbb{R}^{N}\right).
\end{equation*}
Since
\begin{equation*}
x_{k}\left( 0\right) =\theta \left( 0\right) \text{ \ and \ }
\int_{0}^{t} \dot{x}_{n}\left( \tau \right) d\tau \longrightarrow
\int_{0}^{t}\sigma \left( \tau \right) d\tau,
\end{equation*}
we obtain that $x\left( t\right) =\theta \left( 0\right)
+\int_{0}^{t}\sigma \left( \tau \right) d\tau$ and $\dot{x}\left(t\right)
=\sigma \left( t\right)$ a.e. $t\in I$.
The sequence $\left( u_{n}\left(\cdot\right)\right)_{n}$ is bounded in
$L^{2}\left(  I,\mathbb{R}^{m}\right)$.
Thus, there exists a subsequence $\left(x_{k}\left(\cdot\right)\right)_k$
such that $u_{k}\left(\cdot\right) \longrightarrow u\left(\cdot\right)$
weakly in $L^{2}\left(I,\mathbb{R}^{N}\right)$. To complete
the proof, we show that $\left(x\left(\cdot\right),u\left(\cdot\right)\right)$
is an optimal solution to $\left(\mathcal{P}\right)$.
By Proposition~\ref{pro4.1}, we have
\begin{equation*}
B^{* } \phi _{k}\left( t\right) =\frac{1}{c_{k}}
b_{k}\left( t\right) \quad  \text{ a.e. } t\in I.
\end{equation*}
Hence,
$$
\int_{0}^{t}\left\Vert
B^{* } \phi_{k}\left( \tau \right) \right\Vert d\tau =\frac{1}{c_{k}}
\int_{0}^{t}\left\Vert b_{k}\left( \tau \right) \right\Vert d\tau
\leq \frac{1}{c_{k}}M
$$
with $M=\top\left\Vert \gamma_4 \left(\cdot\right) \right\Vert_{L^{2}}$.
We conclude that
\begin{equation*}
\int_{0}^{t} B^{*} \phi _{k}\left(\tau \right) d\tau
\longrightarrow 0\quad \text{ for all } t\in I.
\end{equation*}
On the other hand,
\begin{equation*}
\int_{0}^{t} B^{* } \phi _{k}\left(\tau \right) d\tau
\longrightarrow \int_{0}^{t} B^{*} \phi \left( \tau \right) d\tau
\quad \text{ for all } t\in I.
\end{equation*}
Consequently,
\begin{equation*}
\int_{0}^{t}B^{* } \phi \left(\tau\right) d\tau =0
\quad \text{ for all } t\in I.
\end{equation*}
This implies that
\begin{equation*}
B^{\star } \phi \left( t\right) =0
\quad \text{ for all } t\in I.
\end{equation*}
Thus,
\begin{gather*}
\phi \left( t\right) =0
\quad \text{ for all } t\in I,\\
\dot{x}\left( t\right) =Ax\left( t-\tau_1\right) )+Bu\left( t\right)
\quad \text{ for all } t\in I
\end{gather*}
and  $x\left( t\right) =\theta \left( t\right) $ a.e. $t\in \left[
-\tau_1,0\right]$, $x\left(  \top \right)=\alpha$.
Then, $\left(x\left(\cdot\right), u\left(\cdot\right)\right)$ is an admissible pair and
\begin{equation*}
\int_{I} l\left(t,x\left( t\right),\dot{x}\left( t-\tau_2\right) ,u(t)\right) \ge \mathcal{M}.
\end{equation*}
On the other hand,
\begin{equation*}
\int_{ I} l\left( t,x_{k}\left( t\right),
\dot{x}_{k}\left( t-\tau_2\right), u_{k}\left( t\right) \right) dt \leq \mathcal{M}.
\end{equation*}
Now the hypotheses $(H_1)$, $(H_2)$ and $(H_4)$, together with Lebesgue's theorem, assert that
\begin{equation*}
\int_{I} l\left(t,x\left( t\right),\dot{x}\left( t-\tau_2\right) ,u(t)\right)\leq \mathcal{M},
\end{equation*}
that is,
$$
\int_{I} l\left(t,x\left( t\right),\dot{x}\left( t-\tau_2\right) ,u(t)\right)= \mathcal{M}.
$$
This implies that the pair $\left(x\left(\cdot\right) ,u\left(\cdot\right)\right)$ is
a solution to problem \eqref{Pce}--\eqref{Pce4}.
\end{proof}

% -------------------------------------

\section{Conclusion}
\label{sec:conc}

New optimality conditions for problems of the calculus of variations
and optimal control with time delays, where the delay in the unknown function
differs from the delay in its derivative/control, were obtained.
The proofs are first given in the simpler context of the delayed calculus of variations,
and then extended to delayed optimal control problems by using a penalty method.
New results include a convergence theorem (see Theorem~\ref{thm4.1}),
which is of great practical interest because it allows to obtain a solution
to a delayed optimal control problem by considering a sequence
of simpler problems of the calculus of variations.
Previous results in the literature \cite{MyID:256,MyID:231,MyID:253}
consider the delay in the unknown function to be the same
as the delay in its derivative. There is, however,
no justification for the delays to be the same.
In contrast with those results, here we consider
the case of multiple time delays.
Moreover, the procedure of our proofs is completely
different from the case of one time delay only,
which relies in the the Lagrange multiplier method.
Such approach introduces a new unknown function,
the Lagrange multiplier, for which it is hard to set
the interpolation space. Indeed, the Lagrange multiplier must be carefully
selected in order to be possible to obtain an accurate solution.
Otherwise, the resulting system of equations my become singular,
in particular if the number of degrees of freedom is too large.
Here we use a penalty method, which requires only the choice of one scalar parameter.
Big values of this parameter are used in order to impose the boundary conditions
in a proper manner. Furthermore, in our case the use of the penalty method
replaces a constrained optimization problem (the delayed optimal control problem)
by a sequence of unconstrained problems of the calculus of variations with time delay
whose solutions converge to the solution of the original constrained problem.
Similarly to \cite{MyID:231}, our results can be easily extended for controls with time delay.

% -------------------------------------

\section*{Conflict of Interests}

The authors declare that there is no conflict of interests
regarding the publication of this paper.

% -------------------------------------

\section*{Acknowledgements}

This work was partially supported by Portuguese funds through the
\emph{Center for Research and Development in Mathematics and Applications} (CIDMA),
and \emph{The Portuguese Foundation for Science and Technology} (FCT),
within project PEst-OE/MAT/UI4106/2014. Torres was also supported
by the FCT project PTDC/EEI-AUT/1450/2012, co-financed by FEDER under
POFC-QREN with COMPETE reference FCOMP-01-0124-FEDER-028894.
The authors are grateful to two anonymous referees
for valuable remarks and comments, which
significantly contributed to the quality of the paper.

% -------------------------------------

% -------------------------------------

\end{document}